\documentclass[a4paper,12pt]{amsart}
\usepackage[english]{babel}
\usepackage{amsmath,amsthm,amssymb,amsfonts}
\usepackage{multicol}
\usepackage{todonotes}
 \usepackage{enumitem} 
\usepackage[bookmarks=true,hyperindex,pdftex,colorlinks,citecolor=blue]{hyperref}
\hypersetup{colorlinks=true,  linkcolor=blue, citecolor=red, urlcolor=cyan}

\usepackage[backend=biber,style=alphabetic,maxbibnames=99]{biblatex}  
\usepackage{csquotes} 
\addbibresource{Bibliography.bib}

\usepackage{xcolor}
\usepackage[export]{adjustbox}
\usepackage{graphicx}
\usepackage{subcaption}

\usepackage[english]{babel}

\usepackage{graphicx}

\usepackage{tikz}
\usetikzlibrary{mindmap,backgrounds, calc}
\usetikzlibrary{matrix,chains,scopes,positioning,arrows,fit}
\usetikzlibrary{positioning, shapes, patterns}
\usetikzlibrary{automata} 
\usetikzlibrary{shapes.geometric, arrows, arrows.meta}
\usetikzlibrary{positioning,decorations.markings}

\parskip=1ex
\textwidth=15cm
 \hoffset=-1.1cm

\theoremstyle{theorem}
\newcounter{maincoro}

\newtheorem*{theoremA}{Theorem A}

\newtheorem{theorem}{Theorem}[section]
\newtheorem{lemma}[theorem]{Lemma}

\newtheorem{corollary}[theorem]{Corollary}

\theoremstyle{definition}
\newcounter{maintheorem}

\theoremstyle{remark}
\newtheorem{remark}[theorem]{Remark}
\numberwithin{equation}{section}

 \makeatletter
\setcounter{tocdepth}{3}

\renewcommand{\tocsection}[3]{%
	\indentlabel{\@ifnotempty{#2}{\bfseries\ignorespaces#1 #2\quad}}\bfseries#3}
\renewcommand{\tocsubsection}[3]{%
	\indentlabel{\@ifnotempty{#2}{\ignorespaces#1 #2\quad}}#3}

\newcommand\@dotsep{4.5}
\def\@tocline#1#2#3#4#5#6#7{\relax
	\ifnum #1>\c@tocdepth 
	\else
	\par \addpenalty\@secpenalty\addvspace{#2}%
	\begingroup \hyphenpenalty\@M
	\@ifempty{#4}{%
		\@tempdima\csname r@tocindent\number#1\endcsname\relax
	}{%
		\@tempdima#4\relax
	}%
	\parindent\z@ \leftskip#3\relax \advance\leftskip\@tempdima\relax
	\rightskip\@pnumwidth plus1em \parfillskip-\@pnumwidth
	#5\leavevmode\hskip-\@tempdima{#6}\nobreak
	\leaders\hbox{$\m@th\mkern \@dotsep mu\hbox{.}\mkern \@dotsep mu$}\hfill
	\nobreak
	\hbox to\@pnumwidth{\@tocpagenum{\ifnum#1=1\bfseries\fi#7}}\par
	\nobreak
	\endgroup
	\fi}
\AtBeginDocument{%
	\expandafter\renewcommand\csname r@tocindent0\endcsname{0pt}
}
\def\l@subsection{\@tocline{2}{0pt}{2.5pc}{5pc}{}}
\makeatother

\newcommand{\nn}[1]{{\left\vert\kern-0.25ex\left\vert\kern-0.25ex\left\vert #1 
		\right\vert\kern-0.25ex\right\vert\kern-0.25ex\right\vert}}

\renewcommand{\geq}{\geqslant}
\renewcommand{\leq}{\leqslant}

\newcommand{\vertiii}[1]{{\left\vert\kern-0.25ex\left\vert\kern-0.25ex\left\vert #1 
    \right\vert\kern-0.25ex\right\vert\kern-0.25ex\right\vert}}

\thanks{}

\subjclass[2020]{}

\date{\today}

\keywords{}



\begin{document}

\title[Non-uniformly continuous nearest point maps]{Non-uniformly continuous nearest point maps}

\author[R. Medina]{Rubén Medina}
\address[R. Medina]{Universidad de Granada, Facultad de Ciencias. Departamento de Análisis Matemático, 18071-Granada (Spain); and Czech Technical University in Prague, Faculty of Electrical Engineering. Department of Mathematics, Technická 2, 166 27 Praha 6 (Czech Republic) \newline
\href{https://orcid.org/0000-0002-4925-0057}{ORCID: \texttt{0000-0002-4925-0057}}}
\email{rubenmedina@ugr.es}

\author[A. Quilis]{Andrés Quilis}
\address[A. Quilis]{{Universit\'{e} Franche-Comt\'{e}, Laboratoire de math\'{e}matiques de Besançon, UMR CNRS 6623, 16, route de Gray, 25000 Besançon (France)}\newline
\href{https://orcid.org/0000-0001-6022-9286}{ORCID: \texttt{0000-0001-6022-9286}}}
\email{andresqsa@gmail.com}

\thanks{}

\keywords{Retractions, nearest point maps, proximity mappings, metric projections, compact convex sets, Banach spaces, locally uniformly convex norms}
\subjclass[2020]{46B20, 46B80, 51F30, 54C15}

\begin{abstract} 
We construct a Banach space satisfying that the nearest point map (also called proximity mapping or metric projection) onto any compact and convex subset is continuous but not uniformly continuous. The space we construct is locally uniformly convex, which ensures the continuity of all these nearest point maps. Moreover, we prove that every infinite-dimensional separable Banach space is arbitrarily close (in the Banach-Mazur distance) to one satisfying the above conditions.
\end{abstract}
\maketitle
\tableofcontents

\section{Introduction}
\subsection{Main result and background}

This note concerns nearest point maps (also called proximity mappings or metric projections) in Banach spaces onto compact subsets. Given a metric space $(M,d_M)$ and a subset $C\subset M$, a map $R\colon (M,d_M)\rightarrow (C,d_M)$ is called a \emph{nearest point map} if $d_M(x,R(x))=d_M(x,C)=\inf\{d_M(x,c)\colon c\in C\}$. Note that compactness of the target is sufficient, but not always necessary, to guarantee the existence of such retractions. 

Given a map $F\colon (M,d_M)\rightarrow (N,d_N)$ between metric spaces, the modulus of continuity of $F$ is a non-decreasing function $\omega_F\colon \mathbb{R}^+\rightarrow \mathbb{R}^+\cup\{+\infty\}$ given by $\omega_F(t)=\sup\{d_N(F(x),F(y))\colon x,y\in M,~ d_M(x,y)\leq t\}$. The map $F$ is uniformly continuous if and only if $\lim_{t\rightarrow 0}\omega_F(t)=0$.

Our main result is the following theorem:
\begin{theoremA}
\label{THEOREM_A}
    There exists a locally uniformly convex separable Banach space $X$ such that the nearest point map in $X$ onto any compact and convex subset is continuous but not uniformly continuous. 

    Moreover, every infinite-dimensional separable Banach space is $(1+\varepsilon)$-isomorphic to a Banach space with this property, for every $\varepsilon>0$.
\end{theoremA}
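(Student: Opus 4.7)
The proof naturally separates into the continuity claim, the non-uniform continuity claim, and the Banach--Mazur approximation. For the continuity claim I would invoke the classical theorem (due in various forms to Lovaglia, Vlasov, and others) that in a locally uniformly convex Banach space, every closed convex set admits a well-defined continuous nearest point map wherever the map exists; combined with the obvious existence granted by compactness of the target, this gives the continuity half of Theorem~A at no cost. The entire content of the statement therefore lies in forcing \emph{failure} of uniform continuity, uniformly across the family of compact convex sets.

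The plan for the construction is to build $X$ as a direct sum $X = \bigl( \bigoplus_{n\in\N} E_n \bigr)_{\ell_2}$, where each $E_n$ is finite-dimensional and carries an LUR norm whose modulus of convexity degrades as $n \to \infty$. Concretely, inside each $E_n$ I would arrange two unit vectors $u_n, v_n$ with $\|u_n - v_n\| \geq 1$ but $\|(u_n+v_n)/2\| \geq 1 - \delta_n$ for some sequence $\delta_n \to 0$, together with a supporting functional $f_n \in E_n^*$ that ``nearly exposes'' the segment $[u_n,v_n]$. The $\ell_2$ direct sum of LUR norms is LUR, so the resulting $X$ is LUR and separable.

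To prove that the nearest point map onto an arbitrary compact convex $C\subset X$ fails uniform continuity, I would exploit compactness of $C$ to locate, for every $\eta>0$, some large coordinate $n$ for which the image of $C$ under the canonical projection onto $E_n$ has diameter less than $\eta$. This is a standard tail argument: compact sets in an $\ell_2$-sum are asymptotically flat in the tail coordinates. Having localised $C$ transversally to $E_n$, I would then place a point $z_n$ close to $C$ whose nearest point lies near the boundary of $C$ in the direction of $f_n$, and produce two nearby perturbations of $z_n$ in the $\pm(u_n - v_n)$ directions; the almost-flatness of the unit ball of $E_n$ along this segment causes the nearest points to jump between neighbourhoods of two distinct boundary points of $C$, giving the required failure of uniform continuity. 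The main obstacle is the quantitative matching between the degradation rate $\delta_n$, the tail diameter of $C$, and the geometry of the nearest-point computation in $X$: the pair $(\delta_n)$ has to be chosen slowly enough that $X$ is still LUR and the construction survives in \emph{every} coordinate $n$ large enough for the given $C$, while fast enough that the nearest-point oscillation is detectable. This balancing act, carried out uniformly for all compact convex sets simultaneously, is where the real work of the proof lies.

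For the moreover statement I would proceed by renorming. Given any infinite-dimensional separable Banach space $Y$ and $\varepsilon>0$, first use Kadets' theorem to replace its norm by an equivalent LUR norm. Then, since $Y$ contains (up to a $1+\varepsilon/2$ perturbation) a complemented subspace isomorphic to one of the $\ell_2$-sums used above, I would transplant the ``bad block'' structure of $X$ onto $Y$ via a small multiplicative perturbation of the LUR norm, keeping the distortion within the factor $1+\varepsilon$. The argument producing non-uniform continuity is intrinsic to the transplanted block structure and is therefore preserved under this perturbation, yielding the $(1+\varepsilon)$-isomorphic copy of $Y$ with the desired property.
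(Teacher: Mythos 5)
Your reduction of the problem to defeating uniform continuity, and your use of compactness to find tail coordinates on which $C$ is nearly degenerate, are both in the spirit of the paper. However, there are two genuine gaps. The first is in the core mechanism. You confine the flat segment $[u_n,v_n]$ of the unit sphere to the single summand $E_n$, and you simultaneously use the fact that the projection of $C$ onto $E_n$ has diameter less than $\eta$. But then the two nearest points you want to separate cannot be distinguished by anything supported on $E_n$: their $E_n$-components differ by at most $\eta$. Any genuine jump must therefore occur in a direction in which $C$ has substantial extent, and in an $\ell_2$-sum such a direction is essentially orthogonal to the tail block $E_n$, so the almost-flatness of $B_{E_n}$ cannot force it. The construction in the paper resolves exactly this by coupling \emph{three} directions inside one flat face of the renormed ball: the slice direction $e$ and the input-perturbation direction $h$ (both of which must be almost orthogonal to $K$, which is arranged via a weak$^*$-null almost biorthogonal system from Josefson--Nissenzweig), and a third direction $v$ along which the nearest points are pushed apart, chosen from a \emph{dense} sequence so that some $v$ approximates a segment $[-\tfrac{\rho}{4}v_K,\tfrac{\rho}{4}v_K]$ actually contained in $K$. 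Your sketch has no analogue of this third, $K$-dependent direction, and without it the "jump" is bounded by the tail diameter $\eta$ and vanishes.

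The second gap is in the "moreover" part. You assert that every infinite-dimensional separable Banach space contains, up to a $(1+\varepsilon/2)$ perturbation, a \emph{complemented} subspace isomorphic to one of your $\ell_2$-sums. This is false in general: the complemented subspace structure of an arbitrary separable space can be extremely poor (hereditarily indecomposable spaces being the extreme case), and even when complemented copies exist one would need the projection constant close to $1$ to stay within the $(1+\varepsilon)$ budget. The paper avoids any structural hypothesis by working intrinsically: it renorms the given space $(X,\|\cdot\|)$ by intersecting its ball with countably many "blueprint" balls, each obtained by shaving a slice $\{e_n^*>1-\rho\}$ and reinserting two carefully placed points $y_n^{\pm}$; the required directions $e_n, h_n=e_{n+1}$ and functionals $e_n^*$ exist in \emph{every} infinite-dimensional Banach space by Josefson--Nissenzweig, the weak$^*$-nullity of $(e_n^*)$ guarantees both that the combined norm is locally a maximum of finitely many LUR norms (hence LUR) and that every compact set eventually lies deep inside all but finitely many slices, and the whole renorming distorts the original norm by only $(1-\rho)^{-1}$. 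If you want to salvage your approach, you would need to replace the direct-sum skeleton by such an intrinsic slice-by-slice renorming.
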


Let us put this result into context: in strictly convex Banach spaces, given a closed and convex subset $C$, if a nearest point map onto $C$ exists, then it is unique; if this is the case, we refer to it as \emph{the} nearest point map. Stronger forms of convexity of the Banach space imply additional properties of the nearest point map onto closed convex subsets. Initially, Phelps showed in \cite{Phe58} that the nearest point map from a Hilbert space $X$ onto any closed convex subset is non-expansive (1-Lipschitz). In fact, this property characterizes Hilbert spaces $X$ whenever $\text{dim}(X) \geq 3$. Later, more generally, Björnestal showed in \cite{Bjo79} that in uniformly convex Banach spaces, the nearest point map onto convex and closed subsets is uniformly continuous. It is well known that only superreflexive Banach spaces admit an equivalent uniformly convex norm.

The situation is much less restrictive when the range is assumed to be compact in addition to convex. For compact and convex subsets in strictly convex Banach spaces, the nearest point map exists, and it is unique and continuous. Brown \cite{Bro74} and Vesel\'y \cite{Ves91} showed that compactness is necessary for continuity in this setting, by constructing reflexive and strictly convex Banach spaces containing a closed (non-compact) convex subset with non-continuous nearest point map. Moreover, in the compact framework, in order to obtain uniform continuity of the nearest point map, we do not need the full strength of uniform convexity of the Banach space. Indeed, Hájek and Medina showed in \cite[Proposition 6.7]{HM21} that given a compact $K$ in a Banach space $X$, if $X$ is uniformly convex in the direction of $x$ for every $x\in S_X \cap\overline{\text{span}(K)}$, then the nearest point map onto $K$ is uniformly continuous. This means, in particular, that every separable Banach space is $(1+\varepsilon)$-isomorphic to a Banach space in which the nearest point map onto any compact convex set is uniformly continuous, for any $\varepsilon>0$. 

In turn, this implies that compact and convex subsets of Banach spaces are absolute uniform retracts. Moreover, Medina showed in \cite{Med23} that, for every $\alpha<1$, a rich class of compact and convex subsets of Banach spaces are actually absolute $\alpha$-Hölder retracts. The term ``rich" here refers to the fact that only certain assumptions on the asymptotic shape of the compact and convex set are needed, which are weak enough such that every separable Banach space contains a compact and convex set with this property and whose closed linear span is the whole space.

A closely related problem by Godefroy and Ozawa (asked in \cite{GO14}) is whether in every separable Banach space there is a Lipschitz retraction onto a compact and convex subset whose closed linear span in the whole space. Theorem \ref{THEOREM_A} is a strong counterexample to a version of Godefroy and Ozawa's question, where we only consider the retractions given by nearest point maps. Furthermore, our example shows that the above-mentioned result \cite[Proposition 6.7]{HM21} cannot be generalized to strictly convex Banach spaces, or even locally uniformly convex.

It is worth mentioning that compactness is crucial in our result. Indeed, in every Banach space there is a $2$-Lipschitz nearest point map onto a bounded convex subset, namely the radial projection onto the unit ball.

\subsection{Outline}

Let us intuitively explain how we arrive to our main result, while describing the content of the two main Sections \ref{blueprint} and \ref{main}. The purpose of this subsection is purely explanatory, so we will not be fully rigorous. This subsection can be skipped, as the rest of the note is self-contained and still contains some brief explanatory remarks. 

We start with an arbitrary infinite-dimensional separable Banach space $(X,\|\cdot\|)$ and a parameter $0<\rho<1/4$, and we will construct a locally uniformly convex equivalent norm $\vertiii{\cdot}_\rho$ in $X$ which satisfies Theorem \ref{THEOREM_A}, and which approximates $\|\cdot\|$ arbitrarily well as $\rho$ goes to $0$. This will clearly prove all statements of Theorem \ref{THEOREM_A}. Note that we may suppose that the starting norm $\|\cdot\|$ is locally uniformly convex, since every separable Banach space is $(1+\varepsilon)$-isomorphic to a locally uniformly convex space for every $\varepsilon>0$ (we briefly discuss this fact in Subsection \ref{Preliminaries}).

First, we construct an equivalent norm in $X$ with much weaker properties, and which depends on some set of parameters $\alpha$. This norm, denoted by $\|\cdot\|_\alpha$, is not even strictly convex, but satisfies a very concrete version of Theorem \ref{THEOREM_A}, and acts as the blueprint for an infinite family of norms we construct in this note. Namely, in $(X,\|\cdot\|_\alpha)$, there exist two different points $x^+_\alpha,x^-_\alpha\in X$ which are closer than some $t_\alpha>0$ (i.e.: $\|x^+_\alpha-x^-_\alpha\|_\alpha<t_\alpha$) and such that any nearest point map $R$ onto certain compact and convex subsets (depending on $\alpha$ as well) it holds that $R(x^+_\alpha)$ and $R(x^-_\alpha)$ are further than $C\rho$, where $C$ is a universal constant (i.e.: $\|R(x^+_\alpha)-R(x^-_\alpha)\|_\alpha>C\rho$). This gives a lower bound on the modulus of continuity of nearest point maps at $t_\alpha$ which only depends on $\rho$, but which only works for certain compact convex subsets. The norm $\|\cdot\|_\alpha$ is constructed based on three vectors in $X$, and it can be faithfully represented in a picture (see Figure \ref{fig:Top_half_unit_ball}).

The idea is that, in order to deal with all compact and convex subsets in $X$, we only need the norms associated to countably many sets of parameters $\alpha$. Similarly, the parameter $t_\alpha>0$ can be made as small as we wish while obtaining the same lower bound on the modulus of continuity; so, again, only countably many norms of the form $\|\cdot\|_\alpha$ are required to fully deny the uniform continuity of nearest point maps. It is also important the fact that, even though $\|\cdot\|_\alpha$ is not strictly convex, sufficiently good locally uniformly convex approximations will clearly satisfy the same result (with slightly different parameters). Again, only countably many approximations of a given norm $\|\cdot\|_\alpha$ will be enough to reproduce its properties. 

The purpose of Section \ref{blueprint} is to construct the norm $\|\cdot\|_\alpha$ for a suitable set of parameters $\alpha$, and to show that good enough approximations to $\|\cdot\|_\alpha$ have the desired concrete version of Theorem \ref{THEOREM_A} above discussed. 

In short, after Section \ref{blueprint} we are able to define a sequence of locally uniformly convex norms $(\|\cdot\|_n)_n$ which yield the same lower bound on the modulus of continuity of nearest point maps for some compact and convex subsets, when evaluated at increasingly smaller $t_n>0$; and which all together deal with any compact convex subset in $X$. We only need to combine all of them into a single locally uniformly convex norm $\vertiii{\cdot}_\rho$ which does all of this simultaneously. We do this in Section \ref{main}, where we finally prove that $\vertiii{\cdot}_\rho$ satisfies Theorem \ref{THEOREM_A}. The combination of $(\|\cdot\|_n)_n$ into a single norm $\vertiii{\cdot}_\rho$ is rather simple: the unit ball of $\vertiii{\cdot}_\rho$ is the intersection of the original unit ball and all unit balls of the sequence of norms $(\|\cdot\|_n)_n$. The reason this intersection works for our purposes is mainly due to two factors: 

In the first place, because of their construction, each locally uniformly convex norm $\|\cdot\|_n$ in the sequence $(\|\cdot\|_n)_n$ coincides with the original norm except in a particular slice $S_n$ (i.e.: an intersection of the unit ball with a hyperplane) and its polar opposite $-S_n$, where the new unit ball is smaller than the original. This means that intersecting the original ball with the unit ball of $\|\cdot\|_n$ only changes these particular slices $\pm S_n$. In the slice $S_n$, there is a critical region of the new unit sphere where the precise geometry is present in order to have the properties we need of $\|\cdot\|_\alpha$. The final norm $\vertiii{\cdot}_\rho$ must leave the critical region of each norm in the sequence $(\|\cdot\|_n)_n$ intact for it to satisfy Theorem \ref{THEOREM_A}. Therefore, we must choose the sequence $(\|\cdot\|_n)_n$ in a way such that the slice $S_n$ does not intersect the critical region of any other norm $\|\cdot\|_m$ if $m\neq n$. 

This leads to the second and final ingredient: for each $n\in\mathbb{N}$, the slice $S_n$ and the critical region within this slice are determined by a pair $(e_n,e^*_n)\in S_{(X,\|\cdot\|)}\times S_{(X^*,\|\cdot\|)}$ with $e^*_n(e_n)=1$. More precisely, the slice $S_n$ is of the form $\{x\in B_{(X,\|\cdot\|)}\colon e^*_n(x)>1-\rho\}$, and the critical region in $S_n$ contains the point $(1-\rho/2)e_n$ and has small diameter (roughly of the order of $\rho$). Therefore, if we choose an almost biorthogonal sequence $(e_n,e_n^*)_n\subset S_{(X,\|\cdot\|)}\times S_{(X^*,\|\cdot\|)}$, we will ensure that the critical regions of each $\|\cdot\|_n$ are not in any other slice $S_m$ for $m\neq n$. In order for $\vertiii{\cdot}_\rho$ to be equivalent and locally uniformly convex, we will also require that any given point in $S_{(X,\|\cdot\|)}$ has a neighbourhood which only intersects finitely many slices. For this reason, we further need the sequence $(e_n^*)_n$ to be weak$^*$-null. The existence of an almost biorthogonal sequence with these requirements in every infinite-dimensional Banach space is a standard consequence of Josefson$-$Nissenzweig Theorem.

\subsection{Notation and preliminaries}\label{Preliminaries}

We finish the introduction by fixing the notation and by presenting some more basic definitions and results that will be used throughout the article. 

We will consider real Banach spaces. The unit ball of a Banach space $(X,\|\cdot\|)$ is denoted by $B_{(X,\|\cdot\|)}$, and the unit sphere by $S_{(X,\|\cdot\|)}$. The dual space of $X$ is denoted by $X^*$, and the dual norm in $X^*$ associated to $\|\cdot\|$ is denoted again by $\|\cdot\|$. 

In a Banach space $X$, two norms $\|\cdot\|_1$ and $\|\cdot\|_2$ are \emph{equivalent} if $(X,\|\cdot\|_1)$ and $(X,\|\cdot\|_2)$ are isomorphic. Given $\varepsilon>0$, we say that two Banach spaces $(X,\|\cdot\|_X)$ and $(Y,\|\cdot\|_Y)$ are \emph{$(1+\varepsilon)$-isomorphic} if there exists an isomorphism $T\colon (X,\|\cdot\|_X)\rightarrow (Y,\|\cdot\|_Y)$ such that $\|T\|\|T^{-1}\|\leq (1+\varepsilon)$. Clearly, if $\|\cdot\|_1$ and $\|\cdot\|_2$ are two norms in a Banach space $X$ such that $a\|\cdot\|_1\leq \|\cdot\|_2\leq b\|\cdot\|_1$, then $(X,\|\cdot\|_1)$ and $(X,\|\cdot\|_2)$ are $b/a$-isomorphic.

We say that a Banach space $(X,\|\cdot\|)$ is \emph{strictly convex} if whenever $x,y\in S_{(X,\|\cdot\|)}$ satisfy $\frac{x+y}{2}\in S_{(X,\|\cdot\|)}$, then $x=y$. We say that $(X,\|\cdot\|)$ is \emph{locally uniformly convex} if for every $x\in S_{(X,\|\cdot\|)}$ and every sequence $(y_n)_n\subset S_{(X,\|\cdot\|)}$ such that $\left\|\frac{x+y_n}{2}\right\|\rightarrow 1$, it holds that $\|x-y_n\|\rightarrow 0$. It is straightforward to see that every locally uniformly convex Banach space is strictly convex. As mentioned in the first subsection, in a strictly convex Banach space, the nearest point map onto any compact and convex subset exists, and is unique and continuous. 

In a Banach space $(X,\|\cdot\|)$, the function $Q_{\|\cdot\|}\colon X\times X\rightarrow \mathbb{R}^+$ given by $Q_{\|\cdot\|}(x,y)=2\|x\|^2+2\|y\|^2-\|x+y\|^2$ for all $(x,y)\in X\times X$ encodes some information about convexity properties of $(X,\|\cdot\|)$. Indeed, $(X,\|\cdot\|)$ is strictly convex if and only if $Q_{\|\cdot\|}(x,y)=0$ implies that $x=y$, and $(X,\|\cdot\|)$ is locally uniformly convex if and only if for every $x\in X$ and every sequence $(y_n)_n\subset X$, if $Q_{\|\cdot\|}(x,y_n)\rightarrow 0$ then $\|x-y_n\|\rightarrow 0$. Thanks to this function, it is straightforward to check that if both $(X,\|\cdot\|_1)$ and $(X,\|\cdot\|_2)$ are locally uniformly convex, then $(X,\max\{\|\cdot\|_1,\|\cdot\|_2\})$ is also locally uniformly convex. This result clearly extends to finitely many equivalent norms in $X$. 

We will use that every separable Banach space is $(1+\varepsilon)$-isomorphic to a locally uniformly convex Banach space. This follows from Kadets' classical result, which states that every separable Banach space $(X,\|\cdot\|_1)$ admits an equivalent norm $\|\cdot\|_2$ such that $(X,\|\cdot\|_2)$ is locally uniformly convex. Then, the norm $\vertiii{\cdot}=(\|\cdot\|_1+\varepsilon\|\cdot\|_2)^{1/2}$ approximates $\|\cdot\|_1$, and $(X,\vertiii{\cdot})$ is locally uniformly convex as well, as can be easily checked thanks to the function $Q_{\vertiii{\cdot}}$. We refer the reader to the monographs \cite{DevGodZiz93} and \cite{GuiMonZiz22} for a more in depth study of these concepts.

\section{Blueprint norm}\label{blueprint}

Fix $0<\rho<1/4$ for the rest of the article, and fix an infinite-dimensional, locally uniformly convex separable Banach space $(X,\|\cdot\|)$. 





In this section we describe the construction of a single equivalent norm $\|\cdot\|_\alpha$ in $X$, depending on a tuple $\alpha=(v,v^*,e,e^*,h,h^*,t)$ with $v,e,h\in S_{(X,\|\cdot\|)}$, $v^*,e^*,h^*\in S_{(X^*,\|\cdot\|)}$ and $t>0$. We require that $t<\rho/16$ and that
\begin{equation}
\label{eq:biorthogonality_alpha}
v^*(v) =e^*(e) =h^*(h) = 1,\qquad\text{and }|e^*(h)|<\frac{\rho}{800} .
\end{equation}
For the remainder of the section, we define the following $4$ vectors in $X$:
\begin{align}
\label{defelem}
x^{\pm}_\alpha&=\Big(1-\frac{\rho}{2}\Big)e\pm t h,\\
y^+_\alpha&=x^+_\alpha+\frac{\rho}{4}v,\qquad y^-_\alpha=x^-_\alpha-\frac{\rho}{4}v.
\end{align}
Note that $\|y^+_\alpha\|,\|y^-_\alpha\|\leq 1-\frac{\rho}{2}+\frac{\rho}{16}+\frac{\rho}{4}<1-\frac{\rho}{8}$.

Let $S_\alpha$ be the slice of $B_{(X,\|\cdot\|)}$ given by $e^*$ and $\rho$, that is,
$$ S_\alpha=\{x\in B_{(X,\|\cdot\|)}:e^*(x)>1-\rho\}.$$
Now, we define the closed, convex and symmetric set
$$ B_\alpha =\text{co}\big((B_{(X,\|\cdot\|)}\setminus\pm S_\alpha)\cup \{y^\pm_\alpha,-y^\pm_\alpha\}\big)\subset B_{(X,\|\cdot\|)}.$$
Clearly, $(1-\rho)B_{(X,\|\cdot\|)}$ is contained in $B_\alpha$, so the Minkowski functional defined by $B_\alpha$ defines an equivalent norm $\|\cdot\|_\alpha$ such that 
\begin{equation}
\label{eq:Eq_Alpha_norms}
    \|\cdot\|\leq \|\cdot\|_\alpha\leq (1-\rho)^{-1}\|\cdot\|.
\end{equation}
The construction of $\|\cdot\|_\alpha$ depends on three directions and a functional, given by the vectors $v,e$ and $h$ and the functional $e^*$. Hence, we can realize it in a three dimensional space. In Figure \ref{fig:Top_half_unit_ball} we have a representation of the top half of $B_\alpha$ if we were to construct it in the three-dimensional euclidean space, using the standard biorthogonal coordinate basis.
\begin{figure}
    \centering
    \includegraphics[width=\textwidth]{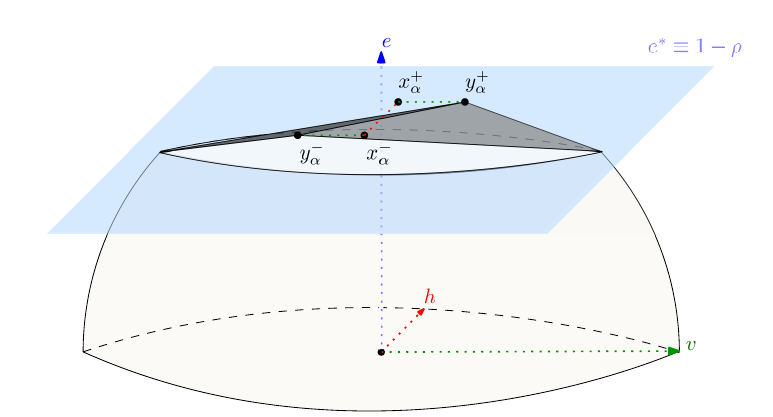}
    \caption{Top half of the unit ball of $\|\cdot\|_\alpha$ in the three-dimensional euclidean space using the vectors from the canonical biorthogonal basis.}
    \label{fig:Top_half_unit_ball}
\end{figure}

\begin{remark}
    The functionals $v^*$ and $h^*$ do not play any role in the definition of $\|\cdot\|_\alpha$, and thus they do not need to be fixed from the beginning as the rest of the elements in the tuple $\alpha$. However, as it has no effect in the construction, we choose to include them in the tuple $\alpha$ in an attempt to make the statements in this section more concise.
\end{remark}

The key feature of the norm $\|\cdot\|_\alpha$ is that vectors of the form $x^\pm_\alpha+k$ have norm strictly bigger than $1$, provided that the vectors $v$ and $k\in X$ are sufficiently orthogonal to both $e^*$ and $h^*$. In order to prove this, we will separate the unit ball $B_{(X,\|\cdot\|_\alpha)}=B_\alpha$ from such points using two particular hyperplanes $\varphi^\pm\in X^*$. Intuitively, the hyperplanes $\varphi^\pm$ are defined by slightly tilting the hyperplane $e^*\equiv 1-\frac{\rho}{2}$ with another hyperplane orthogonal to $y^+_\alpha-y^-_\alpha$. In the following technical lemma we collect, for later reference, some estimates of the image of these hyperplanes on the relevant set of vectors. Note that the estimates we obtain, while sufficient for our purposes, are not optimal, since we opted to prioritize simplicity over sharpness. 

\begin{lemma}
\label{Lemma:Estimate_image_hyperplane}
    Let $\alpha=(v,v^*,e,e^*,h,h^*,t)$ as above. Let $\lambda=1-\frac{\rho}{100}\in \left(\frac{1}{2},1\right)$, and consider: 
    \begin{align*}
        \varphi^+ &= \lambda e^*+(1-\lambda)\left(h^*-\frac{4t}{\rho}v^*\right)\in X^*\\
        \varphi^- &= \lambda e^*+(1-\lambda)\left(-h^*+\frac{4t}{\rho}v^*\right)\in X^*.
    \end{align*}
    Let $k\in B_{(X,\|\cdot\|)}$. Then, we have:

    \begin{enumerate}
        \item $\varphi^+\left(x^+_\alpha+k-y^+_\alpha\right)\geq \frac{t\rho}{25}\left(\frac{1}{16}-\frac{1}{\rho}v^*(k)\right)-\left|e^*\left(k-\frac{\rho}{4}v\right)\right|-\left|h^*\left(k-\frac{\rho}{4}v\right)\right|$.

        \item $\varphi^+\left(x^+_\alpha+k-y^-_\alpha\right)\geq \frac{t\rho}{25}\left(\frac{1}{16}-\frac{1}{\rho}v^*(k)\right)-\left|e^*\left(k+\frac{\rho}{4}v\right)\right|-\left|h^*\left(k+\frac{\rho}{4}v\right)\right|$.

        \item $\varphi^+\left(x^+_\alpha+k-z\right)\geq \frac{\rho}{8}-|e^*(k)|$, for all $z\in B_{(X,\|\cdot\|)}$ such that $e^*(z)\leq(1-\rho)$.
    \end{enumerate}

    Similarly, we also obtain:

    \begin{enumerate}
        \item $\varphi^-\left(x^-_\alpha+k-y^-_\alpha\right)\geq \frac{t\rho}{25}\left(\frac{1}{16}+\frac{1}{\rho}v^*(k)\right)-\left|e^*\left(k+\frac{\rho}{4}v\right)\right|-\left|h^*\left(k+\frac{\rho}{4}v\right)\right|$.
     
        \item $\varphi^-\left(x^-_\alpha+k-y^+_\alpha\right)\geq \frac{t\rho}{25}\left(\frac{1}{16}+\frac{1}{\rho}v^*(k)\right)-\left|e^*\left(k-\frac{\rho}{4}v\right)\right|-\left|h^*\left(k-\frac{\rho}{4}v\right)\right|$.

        \item $\varphi^-\left(x^-_\alpha+k-z\right)\geq \frac{\rho}{8}-|e^*(k)|$, for all $z\in B_{(X,\|\cdot\|)}$ such that $e^*(z)\leq(1-\rho)$.
    \end{enumerate}
\end{lemma}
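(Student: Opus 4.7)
The plan is to establish each of the six inequalities by direct expansion of $\varphi^\pm$ using linearity, followed by careful bookkeeping of the resulting $t$-order, $\rho$-order, and pairing terms. The only ingredients required are the hypotheses $t<\rho/16$, $\rho<1/4$, $|e^*(h)|<\rho/800$, the biorthogonality relations $v^*(v)=e^*(e)=h^*(h)=1$, and the fact that the six elements $v,e,h,v^*,e^*,h^*$ all have norm at most $1$, so every remaining pairing is bounded by $1$ in absolute value.

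I would first simplify the arguments. Since $y^+_\alpha-x^+_\alpha=(\rho/4)v$ and $y^-_\alpha-x^-_\alpha=-(\rho/4)v$, one has
\[x^+_\alpha+k-y^+_\alpha=k-(\rho/4)v,\qquad x^+_\alpha+k-y^-_\alpha=2th+k+(\rho/4)v,\]
and symmetric expressions for $\varphi^-$. This reduces items (1), (2), (4), (5) to applying $\varphi^\pm$ to explicit linear combinations of $k$, $v$, and $h$.

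For items (1) and (4), after expanding and using $v^*(v)=1$, the only non-trivial $t$-contribution is $(1-\lambda)(4t/\rho)(\rho/4)=(1-\lambda)t=t\rho/100$, which already dominates the target $(t\rho/25)(1/16)=t\rho/400$. The remaining terms $\lambda e^*(k\mp(\rho/4)v)$ and $(1-\lambda)h^*(k\mp(\rho/4)v)$ are bounded below by $-|e^*(\cdot)|-|h^*(\cdot)|$, and the $v^*(k)$-dependent term produces exactly $\mp(1-\lambda)(4t/\rho)v^*(k)=\mp(t/25)v^*(k)$ with the correct sign. For items (2) and (5), after expansion the $t$-block reads $2t\lambda e^*(h)+2t(1-\lambda)h^*(h)-(1-\lambda)(8t^2/\rho)v^*(h)-(1-\lambda)t$, and I would estimate each piece crudely: $2t|e^*(h)|\leq t\rho/400$ using $|e^*(h)|<\rho/800$; $2t(1-\lambda)=t\rho/50$; $(1-\lambda)(8t^2/\rho)\leq t\rho/200$ using $t<\rho/16$; and $(1-\lambda)t=t\rho/100$. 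Summing yields at least $t\rho/400$, again matching the target.

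For items (3) and (6), I would start from $e^*(x^\pm_\alpha)=(1-\rho/2)\pm t\,e^*(h)$, which together with $e^*(z)\leq 1-\rho$ gives $e^*(x^\pm_\alpha-z)\geq\rho/2-t\rho/800$; after multiplying by $\lambda\geq 1-\rho/100$, the contribution stays close to $\rho/2$. The remaining $(1-\lambda)(h^*\mp(4t/\rho)v^*)$-contribution to $\varphi^\pm(x^\pm_\alpha-z)$ is bounded crudely by $(1-\lambda)(2+8t/\rho)\leq\rho/40$, and $\varphi^\pm(k)$ splits into $\lambda e^*(k)\geq-|e^*(k)|$ and a piece of size at most $(1-\lambda)(1+4t/\rho)\leq\rho/80$. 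Adding these yields at least $\rho/2-\rho/400-\rho/40-\rho/80-|e^*(k)|\geq\rho/8-|e^*(k)|$, as required.

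The main obstacle is purely bookkeeping: the estimates are not sharp, and all the care lies in ensuring that the small corrections arising from the non-biorthogonality of the triple $(v,e,h)$ (the pairings $e^*(h), h^*(e), h^*(v), v^*(e), v^*(h)$) together with the quadratic-in-$t$ terms are absorbed by the headroom between the $(1-\lambda)t$-type main term and the target $t\rho/400$. The hypothesis $|e^*(h)|<\rho/800$ is used precisely to control the cross-term $2t\lambda e^*(h)$ in items (2) and (5); the other non-biorthogonality pairings survive only after multiplication by the small factor $(1-\lambda)=\rho/100$, which is generous enough to absorb them without the need for additional hypotheses.
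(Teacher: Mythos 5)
Your proposal is correct and follows essentially the same route as the paper: reduce each item to evaluating $\varphi^{\pm}$ on the explicit vectors $k\mp\frac{\rho}{4}v$, $2th+k\pm\frac{\rho}{4}v$, and $x^{\pm}_\alpha+k-z$, then bound the main term $(1-\lambda)t=\frac{t\rho}{100}$ against the target $\frac{t\rho}{400}$ while absorbing the cross-pairings via $t<\rho/16$ and $|e^*(h)|<\rho/800$. Your numerical bookkeeping (e.g.\ $-\frac{t\rho}{400}+\frac{t\rho}{50}-\frac{t\rho}{200}-\frac{t\rho}{100}=\frac{t\rho}{400}$ in item (2), and $\frac{\rho}{2}-\frac{\rho}{400}-\frac{\rho}{40}-\frac{\rho}{80}\geq\frac{\rho}{8}$ in item (3)) checks out and differs from the paper's only in how the crude estimates are grouped.
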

\begin{proof}
The proof consists of elementary computation. We will show the result for $\varphi^+$ and $x^+_\alpha$, since the second part is shown similarly. 

Fix $k\in B_{(X,\|\cdot\|)}$. For item (1), we have:
\begin{align*}
    \varphi^+(x^+_\alpha+k-y^+_\alpha)&=\varphi^+\left(k-\frac{\rho}{4}v\right)\\
    &=\lambda e^*\left(k-\frac{\rho}{4}v\right)+(1-\lambda)h^*\left(k-\frac{\rho}{4}v\right)-(1-\lambda)\frac{4t}{\rho}v^*(k)+(1-\lambda)t\\
    &\geq \frac{t\rho}{100}\left(1-\frac{4}{\rho}v^*(k)\right)-\left|e^*\left(k-\frac{\rho}{4}v\right)\right|-\left|h^*\left(k-\frac{\rho}{4}v\right)\right|
\end{align*}
which in particular implies the estimate we need. 
We have a similar process for item (2):
\begin{align*}
    \varphi^+(x^+_\alpha+k-y^-_\alpha)&=\varphi^+\left(k+2th+\frac{\rho}{4}v\right)\\
    &=\lambda e^*\left(k+\frac{\rho}{4}v\right)+2t\lambda e^*(h)+(1-\lambda)h^*\left(k+\frac{\rho}{4}v\right)+(1-\lambda)2t\\
    &\phantom{=}+(1-\lambda)\left(-\frac{4t}{\rho}v^*(k)-\frac{8t^2}{\rho}v^*(h)-t\right)\\
    &= t\left((1-\lambda)\left(2-\frac{4}{\rho}v^*(k)-\frac{8t}{\rho}v^*(h)-1\right)+2\lambda e^*(h)\right)\\
    &\phantom{=}+\lambda e^*\left(k+\frac{\rho}{4}v\right)+(1-\lambda)h^*\left(k+\frac{\rho}{4}v\right)\\
    &\geq t\left((1-\lambda)\left(\frac{1}{2}-\frac{4}{\rho}v^*(k)\right)-2 |e^*(h)|\right)\\
    &\phantom{=}-\left|e^*\left(k+\frac{\rho}{4}v\right)\right|-\left|h^*\left(k+\frac{\rho}{4}v\right)\right|\\
    &\geq \frac{t\rho}{25}\left(\frac{1}{16}-\frac{1}{\rho}v^*(k)\right)-\left|e^*\left(k+\frac{\rho}{4}v\right)\right|-\left|h^*\left(k+\frac{\rho}{4}v\right)\right|,
\end{align*}
where we have used that $\frac{8t}{\rho}<\frac{1}{2}$ and $|e^*(h)|<\frac{\rho}{800}=\frac{(1-\lambda)}{8}$.

For claim (3), observe that the choice of $\lambda$ implies in particular that $5(1-\lambda)<\frac{\rho}{16}$. With this, for any $z\in X$ with $\|z\|\leq 1$ and $e^*(z)<1-\rho$ we obtain:

\begin{align*}
    \varphi^+(x^+_\alpha+k-z)&=\varphi^+\left(\left(1-\frac{\rho}{2}\right)e+th+k-z\right)\\
    &=\lambda\left(1-\frac{\rho}{2}-e^*(z)\right)+\lambda (te^*(h)+e^*(k))\\
    &\phantom{=}+(1-\lambda)\left(t+h^*\left(\left(1-\frac{\rho}{2}\right)e+k-z\right)\right)\\
    &\phantom{=}-(1-\lambda)\frac{4t}{\rho}v^*\left(\left(1-\frac{\rho}{2}\right)e+th+k-z\right)\\
    &\geq \lambda\frac{\rho}{2}-\lambda t|e^*(h)|-\lambda |e^*(k)|-(1-\lambda)(3+t)-(1-\lambda)\frac{1}{4}(3+t)\\
    &\geq \frac{\rho}{4}-\frac{\rho}{16}-5(1-\lambda)-|e^*(k)|\geq \frac{\rho}{8}-|e^*(k)|.
\end{align*}
In the second to last inequality, we used the (suboptimal) bound $\lambda t|e^*(h)|<\frac{\rho}{16}$ given by the condition $t<\frac{\rho}{16}$ we imposed for $\alpha$.
\end{proof}

We now state and prove the main result of this section. The core argument of its proof is the following: In the space $(X,\|\cdot\|_\alpha)$, whenever we have a convex compact set $K$ containing $\pm \frac{\rho}{4} v$, any nearest point map $R$ onto $K$ satisfies that $\|x^\pm_\alpha-R(x^\pm_\alpha)\|_\alpha\leq 1$, since the points $\mp \frac{\rho}{4} v\in K$ in place of $R\left(x^\pm_\alpha\right)$ satisfy this inequality. Therefore, if the vectors in $K$ (which include $\pm\frac{\rho}{4}v$) are sufficiently orthogonal to $e^*$ and $h^*$, using the previous lemma we deduce that $-v^*\left(R(x^+_\alpha)\right)$ and $v^*\left(R(x^-_\alpha)\right)$ must at least be strictly bigger than $\frac{\rho}{16}$, since otherwise the vectors $x^\pm_\alpha - R(x^\pm_\alpha)$ will be separated from the unit ball $B_\alpha$ by the hyperplanes $\varphi^\pm$. Therefore, $v^*$ separates the vectors $R(x^+_\alpha)$ and $R(x^-_\alpha)$ by $\frac{\rho}{8}$, which yields a lower bound on the modulus of continuity of $R$ for the parameter $(1-\rho)^{-1}2t\geq\|x^+_\alpha-x^-_\alpha\|_\alpha$. Importantly, this lower bound only depends on $\rho$. 

The statement of the theorem, and consequently its proof, are slightly more technical than the above paragraph. This is due to the fact that the final norm is constructed by approximating infinitely many norms of the form $\|\cdot\|_\alpha$, and thus we already include some approximating considerations in this section. As in the previous lemma, for the sake of simplicity and readability, the conditions we assume for the compact $K$ and the estimates we obtain are not optimal.

\begin{theorem}
\label{Th:Estimate_modulus_NPM}
    Let $\alpha=(v,v^*,e,e^*,h,h^*,t)$ as above. Let $\eta>0$ such that $\eta<\frac{t\rho}{12800}$. Suppose that $(X,\vertiii{\cdot})$ is strictly convex, and suppose that $\|\cdot\|_\alpha\leq\vertiii{\cdot}\leq (1-\rho)^{-2}\|\cdot\|$ and $\vertiii{y^\pm_\alpha}\leq (1+\eta)$. 

    Let $K$ be a convex and compact set in $B_{(X,\vertiii{\cdot})}$ such that there exists $v_K\in X$ with $\frac{\rho}{4}\vertiii{v-v_K}<\eta$ and $\left[-\frac{\rho}{4} v_K,\frac{\rho}{4} v_K\right]\subset K$. If $\sup_{k\in K}|e^*(k)|,\sup_{k\in K}|h^*(k)|<\frac{t\rho}{12800}$, then the nearest point map $R\colon (X,\vertiii{\cdot})\rightarrow (K,\vertiii{\cdot})$ satisfies:
    $$\omega_{R}\left((1-\rho)^{-2}2t\right)\geq \frac{\rho}{16}.$$
\end{theorem}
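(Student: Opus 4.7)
My strategy is to exhibit $x = x^+_\alpha$ and $y = x^-_\alpha$ as the pair witnessing the modulus bound. Since $\vertiii{x^+_\alpha - x^-_\alpha} = \vertiii{2th}\leq (1-\rho)^{-2}\cdot 2t$ follows directly from the hypothesis $\vertiii{\cdot}\leq(1-\rho)^{-2}\|\cdot\|$, the task reduces to showing $\vertiii{R(x^+_\alpha)-R(x^-_\alpha)}\geq\rho/16$. To this end I will show that $v^*$ separates the two nearest points by at least $\rho/16$; since $\vertiii{\cdot}\geq\|\cdot\|$ forces $|v^*(\cdot)|\leq\vertiii{\cdot}$, this separation transfers immediately to the $\vertiii{\cdot}$-distance. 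The symmetric roles of $x^+_\alpha, x^-_\alpha$ (and of $\varphi^+, \varphi^-$ in Lemma \ref{Lemma:Estimate_image_hyperplane}) allow me to focus on proving $-v^*(R(x^+_\alpha))\geq\rho/32$, with the complementary bound $v^*(R(x^-_\alpha))\geq\rho/32$ following by the mirrored argument.

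Two preliminary observations set up the argument. First, the orthogonality hypothesis on $K$ in fact transfers to $v$ itself: from $\pm\frac{\rho}{4} v_K \in K$ one deduces $|e^*(v_K)|, |h^*(v_K)| < t/3200$, and combining this with $\|v-v_K\|\leq\vertiii{v-v_K} < 4\eta/\rho < t/3200$ yields the essentially-orthogonality bounds $|e^*(v)|, |h^*(v)| < t/1600$. Second, taking $-\frac{\rho}{4} v_K\in K$ as a competitor for the nearest point to $x^+_\alpha$, the identity $x^+_\alpha + \frac{\rho}{4}v_K = y^+_\alpha + \frac{\rho}{4}(v_K - v)$, the triangle inequality, and the hypotheses $\vertiii{y^+_\alpha}\leq 1+\eta$ and $\frac{\rho}{4}\vertiii{v-v_K}<\eta$ give $\vertiii{p^+}\leq 1+2\eta$, where $p^+ := x^+_\alpha - R(x^+_\alpha)$. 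In particular, each contraction $p^+/(1+2\eta+\varepsilon)$ has $\|\cdot\|_\alpha$-norm strictly less than $1$ and hence lies in $B_\alpha$ for every $\varepsilon>0$.

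The main argument is a proof by contradiction. Assume $v^*(k) < \rho/32$, where $k := -R(x^+_\alpha) \in B_{(X,\|\cdot\|)}$. I then apply Lemma \ref{Lemma:Estimate_image_hyperplane}: items (1) and (2), combined with the $v$-orthogonality from the first preliminary step (which forces $|e^*(k\pm\frac{\rho}{4} v)|+|h^*(k\pm\frac{\rho}{4} v)|$ to stay of order $t\rho$), produce $\varphi^+(p^+ - y^\pm_\alpha)\geq t\rho/1280$; meanwhile, item (3) gives the much larger bound $\rho/8 - |e^*(k)| > t\rho/1280$ on the remaining generators of $B_\alpha$---including the antipodal points $-y^\pm_\alpha$, which lie in $B_{(X,\|\cdot\|)}$ and satisfy $e^*(-y^\pm_\alpha)<0<1-\rho$ so that item (3) does apply. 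By convexity, the uniform lower bound $L := t\rho/1280$ then controls $\varphi^+(p^+ - z)$ for every $z\in B_\alpha$; passing to the limit $\varepsilon\to 0$ with $z = p^+/(1+2\eta+\varepsilon)$ yields $\varphi^+(p^+)\cdot 2\eta/(1+2\eta)\geq L$. Crossed against the crude estimate $\varphi^+(p^+)\leq\|\varphi^+\|\cdot\|p^+\|\leq(1+t/25)\cdot 2 < 4$, this forces $\eta \geq t\rho/10240$, contradicting $\eta < t\rho/12800$.

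The principal technical subtlety, and the step I expect to be the main obstacle, is the first preliminary observation: Lemma \ref{Lemma:Estimate_image_hyperplane} involves $|e^*(k\pm\frac{\rho}{4} v)|$ and $|h^*(k\pm\frac{\rho}{4} v)|$ rather than simply $|e^*(k)|$ and $|h^*(k)|$, so the orthogonality of $K$ to $e^*$ and $h^*$ alone is not enough---a generic $v$ could have $|e^*(v)|$ or $|h^*(v)|$ of order $1$, and then items (1) and (2) would be useless. The hypotheses $\frac{\rho}{4}\vertiii{v-v_K} < \eta$ and $\eta < t\rho/12800$ are precisely tuned so that this orthogonality can be transported from $K$ (via $v_K$) onto $v$ itself. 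After that transfer is established, the remainder of the proof is careful bookkeeping of constants to ensure that the $t\rho$-scale lower bounds produced by Lemma \ref{Lemma:Estimate_image_hyperplane} comfortably dominate the slack $\eta$.
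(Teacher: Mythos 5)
Your proposal is correct and follows essentially the same route as the paper: transfer the near-orthogonality from $v_K$ to $v$, use the competitor $-\frac{\rho}{4}v_K$ to get $\vertiii{x^+_\alpha-R(x^+_\alpha)}\leq 1+2\eta$, assume $v^*(-R(x^+_\alpha))<\rho/32$, and apply Lemma \ref{Lemma:Estimate_image_hyperplane} to bound $\varphi^+$ from below on all generators of $B_\alpha$ (correctly noting that item (3) covers $-y^\pm_\alpha$). The only cosmetic difference is the endgame: the paper scales the generators by $(1+2\eta)$ and plugs $u=x^+_\alpha-R(x^+_\alpha)$ to get $0>0$, whereas you contract $p^+$ into $B_\alpha$ and derive the quantitative contradiction $\eta\geq t\rho/10240$; both are valid and your constants check out.
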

\begin{proof}
     We will show that $\vertiii{R(x^+_\alpha)-R(x^-_\alpha)}\geq \frac{\rho}{16}$, which implies the desired conclusion. We will do it by proving that $v^*\left(R(x^+_\alpha)\right)\leq-\frac{\rho}{32}$ and $v^*\left( R(x^-_\alpha)\right)\geq\frac{\rho}{32}$. Since $\|v^*\|=1$ and $\vertiii{\cdot}$ is a bigger norm, the result follows.

    Notice that, since $\left|e^*\left(\pm\frac{\rho}{4}v_K\right)\right|,\left|h^*\left(\pm\frac{\rho}{4}v_K\right)\right|<\frac{t\rho}{12800}$, it follows that $\left|e^*\left(\pm\frac{\rho}{4}v\right)\right|,\left|h^*\left(\pm\frac{\rho}{4}v\right)\right|<\frac{t\rho}{6400}$. 
    
    We prove the claim for $R(x^+_\alpha)$; the proof for $R(x^-_\alpha)$ is analogous. Suppose by contradiction that $v^*\left(R(x^+_\alpha)\right)>-\frac{\rho}{32}$. Observe first that 
    \begin{align*}
        \vertiii{x^+_\alpha-\left(-\frac{\rho}{4} v_K\right)}\leq \vertiii{y^+_\alpha}+\frac{\rho}{4}\vertiii{v-v_K}\leq 1+2\eta.
    \end{align*}
    Since $-\frac{\rho}{4} v_K$ belongs to $K$, this implies that $\vertiii{x^+_\alpha-R(x^+_\alpha)}$ is at most $1+2\eta$. Consider the functional $ \varphi^+ = \lambda e^*+(1-\lambda)\left(h^*-\frac{4t}{\rho}v^*\right)\in X^*$ with $\lambda=\frac{\rho}{100}$. To arrive at a contradiction, it suffices to show that, for any point $u$ in $(1+2\eta)B_{(X,\vertiii{\cdot})}$ we have $\varphi^+\left(x^+_\alpha-R(x^+_\alpha)-u\right)>0$.  
    
    Using Lemma \ref{Lemma:Estimate_image_hyperplane} with $k = -R(x^+_\alpha)$, we have that 
    \begin{align*}
        \varphi^+\left(x^+_\alpha-R(x^+_\alpha)-y^+_\alpha\right)&> \frac{t\rho}{800}-2\frac{t\rho}{6400}-2\frac{t\rho}{12800}>\frac{t\rho}{1600},\\
        \varphi^+\left(x^+_\alpha-R(x^+_\alpha)-y^-_\alpha\right)&> \frac{t\rho}{800}-2\frac{t\rho}{6400}-2\frac{t\rho}{12800}>\frac{t\rho}{1600}, \\
        \varphi^+\left(x^+_\alpha-R(x^+_\alpha)-z\right)&> \frac{\rho}{8}-\frac{t\rho}{12800}>\frac{t\rho}{1600},\quad\text{for all }z\in B_{(X,\|\cdot\|)}\text{ with }e^*(z)<1-\rho.
    \end{align*} 
    A simple computation shows that $\|\varphi^+\|\leq 1+\frac{t}{25}$. Therefore, we have that 
    \begin{align*}
        \varphi^+\left(x^+_\alpha-R(x^+_\alpha)-(1+2\eta)y^+_\alpha\right)&> \frac{t\rho}{1600}-\left(1+\frac{t}{25}\right)\frac{t\rho}{6400}>0,\\
        \varphi^+\left(x^+_\alpha-R(x^+_\alpha)-(1+2\eta)y^-_\alpha\right)&> \frac{t\rho}{1600}-\left(1+\frac{t}{25}\right)\frac{t\rho}{6400}>0, \\
        \varphi^+\left(x^+_\alpha-R(x^+_\alpha)-(1+2\eta)z\right)&> \frac{t\rho}{1600}-\left(1+\frac{t}{25}\right)\frac{t\rho}{6400}>0,
    \end{align*} 
    for all $z\in B_{(X,\|\cdot\|)}$ with $e^*(z)<1-\rho$.
    Since $B_{(X,\vertiii{\cdot})}$ is contained in $B_\alpha$, every point in $(1+2\eta)B_{(X,\vertiii{\cdot})}$ can be written as a convex combination of $(1+2\eta)y^+_\alpha$, $(1+2\eta)y^-_\alpha$ and $(1+2\eta)z$ with $z\in B_{(X,\|\cdot\|)}$ and $e^*(z)<1-\rho$. This leads to the contradiction we sought. 
\end{proof}

\section{Proof of main theorem}\label{main}

Once we have constructed the blueprint for the norms we will be considering, we proceed to obtain locally uniformly convex approximations of norms of the form $\|\cdot\|_\alpha$, and to combine these approximations into one final equivalent norm which satisfies that the nearest point map onto any non-trivial convex compact set is continuous but fails to be uniformly continuous. 

First, we define a suitable countable set of tuples $(\alpha_n)_n=(v_n,v_n^*,e_n,e_n^*,h_n,h^*_n,t_n)_n$ that will yield the norms to approximate and combine. Let $(A_i)_i$ be an infinite partition of $\mathbb{N}$ formed by infinite sets. Define, for every $n\in\mathbb{N}$, the positive number $t_n=2^{-i}\frac{\rho}{16}$, where $i$ is the unique natural number such that $n\in A_i$. As seen in Theorem \ref{Th:Estimate_modulus_NPM}, the number $t_n$ determines for which parameter of the modulus of continuity we obtain the lower bound, and so we need the sequence $(t_n)_n$ to be arbitrarily close to $0$. 

Since the final construction works for every non-trivial convex compact set, we fix a sequence of vectors $(v_n)_n\subset S_{(X,\|\cdot\|)}$ such that, for every $i\in\mathbb{N}$, the set $\{v_n\colon n\in A_i\}$ is dense in $S_{(X,\|\cdot\|)}$ (we may do this by simply repeating the same dense sequence in each $A_i$). Define for each $n\in\mathbb{N}$ a functional $v_n^*\in S_{(X^*,\|\cdot\|)}$ such that $v_n^*(v_n)=1$. 

To finish defining $\alpha_n$ for every $n\in\mathbb{N}$, it only remains to define the sequences $(e_n)_n,(h_n)_n$ in $S_{(X,\|\cdot\|)}$ and the corresponding norming functionals $(e^*_n)_n,(h^*_n)_n$ in $S_{(X^*,\|\cdot\|)}$. We will in fact consider just two suitable sequences $(e_n)_n$ and $(e_n^*)_n$, and put $h_n=e_{n+1}$ and $h^*_n=e^*_{n+1}$. Note that, in this way, the system $(e_n,e^*_n)_n$ not only determines the slice we modify to construct $\|\cdot\|_{\alpha_n}$, but also where the critical points $x^\pm_{\alpha_n}$ and $y^\pm_{\alpha_n}$ lie within this slice. Therefore, if we hope to preserve the precise geometry of the norms $\|\cdot\|_{\alpha_n}$, we must choose the system $(e_n,e^*_n)_n$ in a way that the slices modified in each $\|\cdot\|_{\alpha_n}$ do not intersect the critical region of every other one. This will be done by considering an almost biorthogonal system: we choose two sequences $(e_n)_n\subset S_{(X,\|\cdot\|)}$ and $(e^*_n)_n\subset S_{(X^*,\|\cdot\|)}$ such that $(e^*_n)_n$ converges weak$^*$ to $0$, and such that $e^*_n(e_n)=1$ for all $n\in\mathbb{N}$ and $|e^*_n(e_m)|<\frac{\rho}{800}$ for all $n\neq m\in \mathbb{N}$. The existence of such an almost biorthogonal system in every infinite-dimensional Banach space is a standard fact that can be deduced from Josefson$-$Nissenzweig's theorem (see e.g.: Claim 1 in \cite{Qui23} for a proof). Note that the almost biorthogonality also guarantees that $|e^*_n(e_{n+1})|<\frac{\rho}{800}$, as required in equation \eqref{eq:biorthogonality_alpha} to define $\|\cdot\|_{\alpha_n}$. The fact that $(e^*_n)_n$ is weak$^*$ null is also crucial, since it is necessary to show that the final space is locally uniformly convex, and that every compact is eventually almost orthogonal to the sequence $(e^*_n)_n$.

Finally, for each $n\in\mathbb{N}$, we define $\alpha_n=(v_n,v_n^*,e_n,e_n^*,e_{n+1},e_{n+1}^*,t_n)$. It is straightforward to check that $\alpha_n$ satisfies the needed conditions in order to define the norm $\|\cdot\|_{\alpha_n}$ (i.e.: $t_n<\frac{\rho}{16}$ and equation \eqref{eq:biorthogonality_alpha}).

We now choose a suitable sequence $(\eta_n)_n$ to produce the approximations. Fix a sequence $(\varepsilon_i)_i$ of positive numbers such that $\varepsilon_i<2^{-i}\frac{\rho^2}{12800}$ for all $i\in\mathbb{N}$. We may choose $\varepsilon_i$ with $1+\varepsilon_i<\left(1-\frac{\rho}{8}\right)^{-1}$ for all $i\in\mathbb{N}$. Finally, define for each $n\in\mathbb{N}$ the positive number $\eta_n = \varepsilon_i$, where $i$ is the unique natural number such that $n\in A_i$. 

In summary, we have chosen, for every $n\in\mathbb{N}$, a tuple $\alpha_n=(v_n,v_n^*,e_n,e_n^*,e_{n+1},e_{n+1}^*,t_n)$ and a positive number $\eta_n$ such that:
\begin{enumerate}
    \item $(t_n)_n$  is a sequence of positive numbers satisfying $0<t_n<\frac{\rho}{16}$ for all $n\in\mathbb{N}$. Moreover, for every $t>0$ there exists $i\in\mathbb{N}$ such that $t_n<t$ for all $n\in A_i$.
    \item $(v_n)_n$ is a dense sequence in $S_{(X,\|\cdot\|)}$, with $v_n^*\in S_{(X,\|\cdot\|)}$ and $v_n^*(v_n)=1$ for all $n\in\mathbb{N}$. Moreover, the subsequence $\{v_n\colon n\in A_i\}$ is dense in $S_{(X,\|\cdot\|)}$ for all $i\in\mathbb{N}$.
    \item $(e_n)_n\subset S_{(X,\|\cdot\|)}$ and $(e^*_n)_n\subset S_{(X^*,\|\cdot\|)}$ form an ``almost biorthogonal" system, i.e.: $e^*_n(e_n)=1$ for every $n\in\mathbb{N}$, and $e^*_m(e_n)<\frac{\rho}{800}$ for all $n\neq m\in\mathbb{N}$.
    \item The sequence $(e^*_n)_n$ converges to $0$ in the weak$^*$ topology in $X^*$.
    \item $(\eta_n)_n$ is a sequence of positive numbers that satisfies $0<\eta_n<\min\left\{\frac{t_n\rho}{12800},\left(1-\frac{\rho}{8}\right)^{-1}-1\right\}$ for all $n\in\mathbb{N}$. Moreover, if $n\in A_i$ for $n,i\in\mathbb{N}$, then $\eta_n=\varepsilon_i$.
\end{enumerate}

Now, let $\|\cdot\|_n$ be a locally uniformly convex norm in $X$ such that
\begin{equation}
\label{eq:LUR_norms_app_alpha}
\|\cdot\|_{\alpha_n}\leq \|\cdot\|_n\leq(1+\eta_n)\|\cdot\|_{\alpha_n}.
\end{equation}
The final norm we consider on $X$ is defined as:
\begin{equation}
\label{eq:Definition_final_norm}
\vertiii{x}_\rho=\sup_{n\in\mathbb{N}}\left\{\|x\|_n,\left(1-\frac{\rho}{8}\right)^{-1}\|x\|\right\},\qquad\text{ for all }x\in X.
\end{equation}

We are now ready to prove the main result of the section:
\begin{theorem}
    \label{Th:Main_Theorem}
    The space $(X,\vertiii{\cdot}_\rho)$ is a locally uniformly convex space, $(1-\rho)^{-1}$-isomorphic to $(X,\|\cdot\|)$, and such that for any non-trivial convex compact subset is continuous but not uniformly continuous.
\end{theorem}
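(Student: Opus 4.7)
First I would verify the sandwich
$(1-\rho/8)^{-1}\|\cdot\|\leq \vertiii{\cdot}_\rho\leq (1-\rho/8)^{-1}(1-\rho)^{-1}\|\cdot\|$, whose ratio is $(1-\rho)^{-1}$. The lower bound is immediate from the definition of $\vertiii{\cdot}_\rho$. For the upper bound, $\|\cdot\|_n\leq(1+\eta_n)\|\cdot\|_{\alpha_n}\leq(1+\eta_n)(1-\rho)^{-1}\|\cdot\|$ by \eqref{eq:LUR_norms_app_alpha} and \eqref{eq:Eq_Alpha_norms}, and item (5) of the construction ($1+\eta_n<(1-\rho/8)^{-1}$) yields $\|\cdot\|_n\leq (1-\rho/8)^{-1}(1-\rho)^{-1}\|\cdot\|$, which dominates the base term $(1-\rho/8)^{-1}\|\cdot\|$. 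As a by-product, $\vertiii{\cdot}_\rho\leq (1-\rho)^{-2}\|\cdot\|$, matching a hypothesis of Theorem~\ref{Th:Estimate_modulus_NPM}.

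\textbf{Local uniform convexity.} I rely on the $Q$-function characterization. Fix $x\in X\setminus\{0\}$ and $(y_k)\subset X$ with $Q_{\vertiii{\cdot}_\rho}(x,y_k)\to 0$, which readily gives $\vertiii{y_k}_\rho\to\vertiii{x}_\rho$ and $\vertiii{(x+y_k)/2}_\rho\to\vertiii{x}_\rho$. For each $k$ pick an index $m_k$ (possibly the base index) almost-realizing $\|(x+y_k)/2\|_{m_k}\to\vertiii{x}_\rho$; the triangle inequality applied to the single norm $\|\cdot\|_{m_k}$ then forces $\|x\|_{m_k},\|y_k\|_{m_k}\to\vertiii{x}_\rho$ and $Q_{\|\cdot\|_{m_k}}(x,y_k)\to 0$. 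The crux is that $(m_k)$ must stay bounded: by weak$^*$-nullness of $(e^*_n)_n$, $|e^*_n(x)|<(1-\rho)\|x\|$ for all $n$ large, so $\|x\|_{\alpha_n}=\|x\|$ and hence $\|x\|_n\leq(1+\eta_n)(1-\rho/8)\vertiii{x}_\rho$; since $\eta_n<\rho^2/12800$, this stays below $\vertiii{x}_\rho$ by a uniform gap of at least $(\rho/16)\vertiii{x}_\rho$. Extracting a constant subsequence $m_k=n^*$, LUC of $\|\cdot\|_{n^*}$ (equal to $\|\cdot\|$ when $n^*=0$) produces $\|x-y_k\|_{n^*}\to 0$, and the isomorphism bound upgrades this to $\vertiii{x-y_k}_\rho\to 0$; a standard subsequence-by-contradiction argument then extends the conclusion to the full sequence.

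\textbf{Reduction for the NPM.} Strict convexity (from LUC) guarantees that the NPM onto any convex compact set is well-defined, unique and continuous. For non-uniform continuity, fix a non-trivial convex compact $K\subset X$. Since NPMs commute with translations and positive scalings, I may replace $K$ by $\tilde K=s(K-m)$, where $p,q\in K$ realize the $\vertiii{\cdot}_\rho$-diameter of $K$, $m=(p+q)/2$, $w_0=(q-p)/2$, and $s>0$ is chosen so that $\tilde K\subset B_{(X,\vertiii{\cdot}_\rho)}$ and $s\|w_0\|\geq\rho/4$. Compatibility of the two constraints on $s$ reduces to $\rho(1-\rho/8)^{-1}(1-\rho)^{-1}\leq 2$, which holds for $\rho<1/4$, via $\vertiii{K-m}_\rho\leq 2\vertiii{w_0}_\rho$ and the isomorphism estimate.

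\textbf{Application of Theorem~\ref{Th:Estimate_modulus_NPM}.} For each large $i\in\mathbb{N}$, density of $\{v_n:n\in A_i\}$ in $S_{(X,\|\cdot\|)}$ combined with uniform weak$^*$-nullness of $(e^*_n)_n$ on the compact $\tilde K$ let me choose $n\in A_i$ with $v_n$ close enough to $v_K:=w_0/\|w_0\|$ that $(\rho/4)\vertiii{v_n-v_K}_\rho<\eta_n$, and with $\sup_{k\in\tilde K}|e^*_n(k)|,|e^*_{n+1}(k)|<t_n\rho/12800$; this is an intersection of an infinite subset of $A_i$ with a cofinite tail, hence still infinite. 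The remaining hypothesis $\vertiii{y^{\pm}_{\alpha_n}}_\rho\leq 1+\eta_n$ is checked by a short almost-biorthogonality computation: $|e^*_m(y^{\pm}_{\alpha_n})|<(1-\rho)\|y^{\pm}_{\alpha_n}\|$ for every $m\neq n$, so $\|y^{\pm}_{\alpha_n}\|_{\alpha_m}=\|y^{\pm}_{\alpha_n}\|\leq 1-\rho/8$ and thus $\|y^{\pm}_{\alpha_n}\|_m\leq(1+\eta_m)(1-\rho/8)<1$ for $m\neq n$, while $\|y^{\pm}_{\alpha_n}\|_n\leq 1+\eta_n$ and $(1-\rho/8)^{-1}\|y^{\pm}_{\alpha_n}\|\leq 1$. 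Theorem~\ref{Th:Estimate_modulus_NPM} then gives $\omega_{R_{\tilde K}}\bigl((1-\rho)^{-2}2t_n\bigr)\geq\rho/16$; as $i\to\infty$, $t_n=2^{-i}\rho/16\to 0$ while the modulus stays $\geq\rho/16$, so $R_{\tilde K}$, and hence $R_K$, is not uniformly continuous. I expect the main technical obstacle to be exactly this simultaneous choice of $n$, balancing proximity of $v_n$ to $v_K$ (density), smallness of $e^*_n,e^*_{n+1}$ on $\tilde K$ (weak$^*$-nullness), and scale compatibility through $(t_n,\eta_n)$ (the partition structure); this is precisely what the construction of the parameters is designed to accommodate.
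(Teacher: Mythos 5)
Your proposal is correct and follows essentially the same route as the paper: the same sandwich estimate for the isomorphism constant, local uniform convexity via the key observation that weak$^*$-nullness of $(e^*_n)_n$ forces all but finitely many $\|\cdot\|_n$ to lie a uniform gap below the supremum near any given point (you package this through $Q_{\vertiii{\cdot}_\rho}$ and a bounded-index subsequence rather than the paper's ``locally a finite maximum'' formulation, but the mechanism is identical), and the same normalization of $K$ followed by an application of Theorem~\ref{Th:Estimate_modulus_NPM} with $n\in A_i$ chosen to balance density of $(v_n)$ and smallness of $e^*_n,e^*_{n+1}$ on $K$. Your explicit diameter-based rescaling of $K$ and the verification $|e^*_m(y^\pm_{\alpha_n})|<(1-\rho)\|y^\pm_{\alpha_n}\|$ are, if anything, slightly more careful than the paper's corresponding steps.
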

\begin{proof}

We first verify that $(X,\vertiii{\cdot}_\rho)$ is $(1-\rho)^{-1}$-isomorphic to $(X,\|\cdot\|)$. For a point $x\in S_{(X,\|\cdot\|)}$, it holds that $\vertiii{x}_\rho\geq \left(1-\frac{\rho}{8}\right)^{-1}$. On the other hand, by equations \eqref{eq:Eq_Alpha_norms} and \eqref{eq:LUR_norms_app_alpha}, we have $\|x\|_n\leq (1+\eta_n)(1-\rho)^{-1}\|x\|$ for every $n\in\mathbb{N}$. We conclude that
\begin{equation}
    \label{eq:Final_norm_eq}
    \left(1-\frac{\rho}{8}\right)^{-1}\|x\|\leq \vertiii{x}_\rho\leq \left(1-\frac{\rho}{8}\right)^{-1}(1-\rho)^{-1}\|x\|,\qquad\text{ for all }x\in X.
\end{equation}

Next we show that $(X,\vertiii{\cdot}_\rho)$ is locally uniformly convex. Fix $x\in S_{(X,\|\cdot\|)}$. Given $n\in\mathbb{N}$ such that $|e^*_n(x)|<1-\rho$, we have $\|x\|_{\alpha_n}=1$, which means that $\|x\|_n\leq (1+\eta_n)<\left(1-\frac{\rho}{8}\right)^{-1}$, and thus $\|x\|_n$ does not participate in the supremum which defines $\vertiii{x}_\rho$. Since the sequence $(e^*_n)_n$ converges to $0$ in the weak$^*$ topology of $X^*$, we deduce that there exists an open neighbourhood $U$ of $x$ and a natural number $n_x\in\mathbb{N}$ such that $\vertiii{z}_\rho=\max_{n\leq n_x}\left\{\|x\|_n,\left(1-\frac{\rho}{8}\right)^{-1}\|x\|\right\}$ for all $z\in U$. This shows that the norm $\vertiii{\cdot}_\rho$ is locally defined by finite intersection of locally uniformly convex norms, and is therefore locally uniformly convex itself. 

To show the final part of the theorem, we will apply Theorem \ref{Th:Estimate_modulus_NPM} for any non-trivial normalized convex compact set and for arbitrarily small $t>0$. Consider a convex compact set $K$ in $X$ with at least two points. By translating and dilating $K$ if necessary, we may assume without loss of generality that $K\subset B_{(X,\vertiii{\cdot}_\rho)}$, and that there exists $v_K\in S_{(X,\|\cdot\|)}$ such that $\left[-\frac{\rho}{4}v_K,\frac{\rho}{4}v_K\right]$ is contained in $K$. 

Consider an arbitrary $t>0$, and fix $i_0\in\mathbb{N}$ such that $t_n<t$ for all $n\in A_{i_0}$. Using that any cofinite subset of the sequence $\{v_n\colon n\in A_{i_0}\}$ is dense in $S_{(X,\|\cdot\|)}$, and that the sequence $(e^*_n)_n$ is weak$^*$ null, we obtain that there exists $n_0\in A_{i_0}$ such that $\frac{\rho}{4}\|v_{n_0}-v_K\|<\eta_{n_0}=\varepsilon_{i_0}$ and $\text{sup}_{k\in K}|e^*_{n_0}(k)|,\text{sup}_{k\in K}|e^*_{n_0+1}(k)|<\frac{t_{n_0}\rho}{12800}$. Note as well that $\eta_{n_0}<\frac{t_{n_0}\rho}{12800}$ by choice of $\varepsilon_{i_0}$. Moreover, equation \eqref{eq:Final_norm_eq} shows in particular that $\|\cdot\|_{\alpha_{n_0}}\leq \vertiii{\cdot}_\rho\leq (1-\rho)^{-2}\|\cdot\|$. 

To apply Theorem \ref{Th:Estimate_modulus_NPM}, it only remains to show that $\vertiii{y^\pm_{\alpha_{n_0}}}_\rho<1+\eta_{n_0}$. First, as computed in the previous section, $\|y^\pm_{\alpha_{n_0}}\|\leq 1-\frac{\rho}{8}$, while $\|y^\pm_{\alpha_{n_0}}\|_{\alpha_{n_0}}=1$. Moreover, for any $m\in\mathbb{N}$ different from ${n_0}$, it holds that 
\begin{align*}
e^*_m(y^\pm_{\alpha_{n_0}})&\leq\left(1-\frac{\rho}{2}\right)|e^*_m(e_{n_0})|+t_{n_0}|e^*_m(e_{{n_0}+1})|+\frac{\rho}{4}|e^*_m(v_{n_0})|\\
&\leq \left(1-\frac{\rho}{2}\right)\frac{\rho}{800}+\frac{\rho}{16}+\frac{\rho}{4}<1-\rho,
\end{align*}
since $\rho<\frac{1}{4}$. This implies that $\|y^\pm_{\alpha_{n_0}}\|_{\alpha_m}=\|y^\pm_{\alpha_{n_0}}\|\leq 1-\frac{\rho}{8}$. With these estimates, equation \eqref{eq:LUR_norms_app_alpha} and the definition of $\vertiii{\cdot}_\rho$ (equation \eqref{eq:Definition_final_norm}) we deduce that $\vertiii{y^\pm_{\alpha_{n_0}}}_\rho\leq 1+\eta_{n_0}$. 

We can apply now Theorem \ref{Th:Estimate_modulus_NPM}, and we obtain that the nearest point map $R\colon (X,\vertiii{\cdot}_\rho)\rightarrow (K,\vertiii{\cdot}_\rho)$ onto $K$ satisfies $\omega_{R}((1-\rho)^{-2}2t_{n_0})>\frac{\rho}{16}$. In particular, $\omega_{R}((1-\rho)^{-2}2t)>\frac{\rho}{16}$. Since this was done for arbitrary $t>0$, we conclude that $R$ is not uniformly continuous. Since $(X,\vertiii{\cdot}_\rho)$ is strictly convex, $R$ is continuous. 
\end{proof}

\begin{remark}
    In Theorem \ref{Th:Main_Theorem} the uniform continuity is spoiled by means of a pair of bounded sequences in $X$. Hence, we deduce the seemingly stronger result that the nearest point maps considered in Theorem \ref{Th:Main_Theorem} are not uniformly continuous even when restricted to the unit ball.
\end{remark}

We can now restate and prove the main result of the article as a direct corollary:

\begin{corollary}[Theorem \ref{THEOREM_A}]
    Every infinite-dimensional separable Banach space is $(1+\varepsilon)$-isomorphic to a locally uniformly convex Banach space in which the nearest point map onto every non-trivial convex and compact set is continuous but not uniformly continuous, for any $\varepsilon>0$.
\end{corollary}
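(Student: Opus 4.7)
The plan is to assemble the corollary as a direct composition of two equivalent renormings, with Theorem \ref{Th:Main_Theorem} doing essentially all the work. Fix an arbitrary infinite-dimensional separable Banach space $(Y,\|\cdot\|_Y)$ and $\varepsilon>0$. The first step is to reduce to the setting in which Theorem \ref{Th:Main_Theorem} is stated, namely a locally uniformly convex starting space. This is exactly what the remark at the end of Subsection \ref{Preliminaries} provides (via Kadets' theorem): for any prescribed $\varepsilon'>0$, one can find a locally uniformly convex norm on $Y$ whose distance from $\|\cdot\|_Y$ is at most $(1+\varepsilon')$, i.e.\ $(Y,\|\cdot\|_Y)$ is $(1+\varepsilon')$-isomorphic to some locally uniformly convex separable Banach space $(X,\|\cdot\|)$.

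The second step is to apply Theorem \ref{Th:Main_Theorem} to $(X,\|\cdot\|)$ with parameter $\rho\in(0,1/4)$ to produce the locally uniformly convex equivalent norm $\vertiii{\cdot}_\rho$. By the theorem, $(X,\vertiii{\cdot}_\rho)$ is $(1-\rho)^{-1}$-isomorphic to $(X,\|\cdot\|)$ and has the desired property that the nearest point map onto every non-trivial convex compact subset is continuous but not uniformly continuous.

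The final step is the calibration: composing the identity map $X\to X$ (viewed between the two norms on $X$) with the isomorphism $Y\to X$ from Kadets' renorming yields an isomorphism $Y\to (X,\vertiii{\cdot}_\rho)$ of Banach–Mazur distortion at most $(1+\varepsilon')(1-\rho)^{-1}$. Choosing $\rho$ sufficiently close to $0$ and then $\varepsilon'$ sufficiently small so that
\[
(1+\varepsilon')(1-\rho)^{-1}\leq 1+\varepsilon
\]
finishes the argument. There is no genuine obstacle: since the conclusion is an existence statement about a specific renorming of $Y$ (up to $(1+\varepsilon)$-isomorphism), we do not need to transport the nearest-point-map property along an isomorphism — the target space $(X,\vertiii{\cdot}_\rho)$ already carries it by Theorem \ref{Th:Main_Theorem}, and Kadets' theorem only serves to reach the hypothesis of that theorem from an arbitrary separable starting space.
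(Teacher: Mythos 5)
Your proposal is correct and follows exactly the paper's own argument: first pass to a locally uniformly convex renorming via Kadets' theorem with small distortion, then apply Theorem \ref{Th:Main_Theorem} with $\rho$ small, and multiply the two distortion constants. The calibration $(1+\varepsilon')(1-\rho)^{-1}\leq 1+\varepsilon$ is precisely what the paper leaves implicit, so no further comment is needed.
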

\begin{proof}
    Note that every separable Banach space is $(1+\varepsilon)$-isomorphic to a locally uniformly convex Banach space, for any $\varepsilon>0$. Therefore, using Theorem \ref{Th:Main_Theorem} and equation \eqref{eq:Final_norm_eq}, it suffices to apply the construction of the norm $\vertiii{\cdot}_\rho$ described above in any infinite-dimensional separable locally uniformly convex Banach space for small enough $0<\rho<1/4$.
\end{proof}
\section*{Acknowledgements}
This work was supported by MCIN/AEI/10.13039/501100011033/FEDER, UE: grants PID2021-122126NB-C31 (R. Medina) and PID2021-122126NB-C33 (A. Quilis)

The research of R. Medina was also supported by FPU19/04085 MIU (Spain) Grant, by Junta de Andalucia Grant FQM-0185 by GA23-04776S project (Czech Republic) and by SGS22/053/OHK3/1T/13 project (Czech Republic).

The research of A. Quilis was also supported by the French ANR project No. ANR-20-CE40-0006.
\printbibliography
\end{document}